\documentclass[11pt]{amsart}

\usepackage{amsmath,amssymb,amsthm,mathtools}
\usepackage[margin=1in]{geometry}
\usepackage[
  colorlinks=true,
  linkcolor=blue,
  citecolor=blue,
  urlcolor=blue
]{hyperref}

\newtheorem{theorem}{Theorem}[section]
\newtheorem{proposition}[theorem]{Proposition}
\newtheorem{lemma}[theorem]{Lemma}
\newtheorem{corollary}[theorem]{Corollary}

\theoremstyle{remark}

\newcommand{\Z}{\mathbb{Z}}
\newcommand{\F}{\mathbb{F}}

\newcommand{\one}{\mathbf{1}}

\DeclareMathOperator{\lc}{lc}
\DeclareMathOperator{\rank}{rank}

\title[Kissing Numbers $\tau(\mathcal{L}_n)\geq e^{2\sqrt{n}}$]
  {A Lattice Family with Kissing Numbers
   $\tau(\mathcal{L}_n)\geq e^{2\sqrt{n}}$}

\author{Thijs Laarhoven}
\address{NXP Semiconductors}
\email{mail@thijs.com}

\author{Scott Duke Kominers}
\address{Harvard Business School; Department of Economics and Center of
  Mathematical Sciences and Applications, Harvard University; and
  a16z crypto}
\email{kominers@fas.harvard.edu}

\date{\today}

\subjclass[2020]{11H31, 52C17, 11T06}
\keywords{lattice, kissing number, finite field, polynomial congruence,
  Reed-Solomon code}

\thanks{%
  We used LLMs to assist with analysis and synthesis in the preparation of
  this article, particularly GPT-6 Astra and Claude Fable 5.1 (accessed in
  part via Poe with the support of Quora, where Kominers is an advisor).
  The initial construction with
  $\tau(\mathcal{L}_n)\geq e^{\sqrt{n}}$ was developed by the first
  author~\cite{Laarhoven} with GPT-6 Astra, and the improved construction
  with $\tau(\mathcal{L}_n)\geq e^{2\sqrt{n}}$ was developed by the second
  author~\cite{Kominers} in dialogue with GPT-6 Astra. This paper combines,
  subsumes, replaces, and extends the two authors' respective earlier
  preprints~\cite{Laarhoven} and~\cite{Kominers}.
  The problem, methods, and final written form are our own; and of course
  any errors remain our responsibility. This work was conducted while
  Kominers was visiting the Technological Innovation, Entrepreneurship,
  and Strategic Management (TIES) Group at the MIT Sloan School of
  Management; he greatly appreciates their hospitality.%
}

\begin{document}

\begin{abstract}
For all prime powers $q\geq5$, we construct lattices
$\mathcal{L}_q\subseteq\Z^q$ with kissing numbers
\[
  \tau(\mathcal{L}_q)\geq
  \left(\frac{1}{2\pi e^2}+o(1)\right)\sqrt{q}\,e^{2\sqrt{q}}.
\]
The same asymptotic bound holds on a set of integer dimensions of natural
density $1$, and in every sufficiently large integer dimension $n$ with an
additional factor $e^{-\tfrac{1}{2}n^{1/40}}$. The construction is an
extension of a previous construction by Bennett--Peikert based on
Reed--Solomon codes.
\end{abstract}

\maketitle


\section{Introduction}
\label{sec:introduction}

For a nonzero Euclidean lattice $\mathcal{L}$, we write
\[
  \min \mathcal{L}
  =\min_{0\ne x\in\mathcal{L}}\langle x,x\rangle
  \quad\text{and}\quad
  \tau(\mathcal{L})
  =\bigl|\{x\in\mathcal{L}:
    \langle x,x\rangle=\min \mathcal{L}\}\bigr|
\]
for the \textit{(squared) minimum} and \textit{kissing number},
respectively. Note that both signs of each minimal vector are counted
in $\tau$.

The Barnes--Wall lattices~\cite{BarnesWall} in dimensions $n=2^m$ have
kissing numbers
\[
  \prod_{i=1}^{m}(2^i+2)=\exp\bigl(\Theta((\log n)^2)\bigr);
\]
see Nebe~\cite[Lemma~3.4]{Nebe}.
Although individual lattices can have larger kissing numbers than the
Barnes--Wall lattice in a given dimension,\footnote{%
  For example, the Mordell--Weil lattice of rank $128$ has kissing number
  more than $170$ times that of $\mathrm{BW}_{128}$~\cite{Elkies}.%
}
to the best of our knowledge, no previously known lattice family achieves
growth beyond this quasipolynomial scale (see also~\cite{Alon,BGS}).

In this note, we give a construction of a lattice family with much larger
kissing numbers, extending a Reed--Solomon lattice construction of Bennett
and Peikert~\cite{BennettPeikert}. In particular, we obtain kissing numbers
at least $e^{2\sqrt{n}+o(\sqrt{n})}$ through prime powers.

\begin{theorem}
\label{thm:main}
There are full-rank lattices $\mathcal{L}_q\subseteq\Z^q$, indexed by the
prime powers $q\geq5$, such that, as $q\to\infty$ through prime powers,
\begin{equation}
  \label{eq:main}
  \tau(\mathcal{L}_q)\geq
  \left(\frac{1}{2\pi e^2}+o(1)\right)\sqrt{q}\,e^{2\sqrt{q}}.
\end{equation}
In particular, $\tau(\mathcal{L}_q)\geq e^{2\sqrt{q}}$ for all sufficiently
large prime powers $q$.
\end{theorem}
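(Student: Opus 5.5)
The plan is to build $\mathcal{L}_q$ as the kernel of a multiplicative ``polynomial congruence'' map, in the spirit of the Bennett--Peikert Reed--Solomon lattices. Then we get minimal vectors by a pigeonhole (collision) count rather than by an explicit description. Index the coordinates of $\Z^q$ by the elements $a\in\F_q$, and fix a parameter $k$ (ultimately $k=m-1$ with $m\approx\sqrt q$). Let $G_k$ be the finite abelian group of truncated power series $1+c_1X^{-1}+\dots+c_kX^{-k}$ over $\F_q$, under multiplication modulo $X^{-(k+1)}$. Define
\[
  \varphi(x)=\Bigl(\sum_a x_a,\ \prod_{a\in\F_q}(1-aX^{-1})^{x_a}\bmod X^{-(k+1)}\Bigr)\in \Z/p\times G_k ,
\]
or simply impose $\sum_a x_a=0$ over $\Z$ via an extra factor $\Z$. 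In either case $\varphi$ is a group homomorphism. Take $\mathcal{L}_q=\ker\varphi$ restricted to the hyperplane $\sum_a x_a=0$, and then complete to full rank. Full rank is automatic if we keep the kernel of the map to the finite group $\Z/N\times G_k$, because the kernel then has finite index in $\Z^q$. The precise bookkeeping with the sum condition is a detail I would fix so that the degree argument below goes through.

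\textbf{Minimum.} Let $0\neq x\in\mathcal{L}_q$ and write $x=x^+-x^-$ with disjoint supports. Set $P=\prod_a(X-a)^{x^+_a}$ and $Q=\prod_a(X-a)^{x^-_a}$. The sum condition makes $P$ and $Q$ monic of the same degree $m$. The kernel condition says $P/Q\equiv1\bmod X^{-(k+1)}$, that is, $\deg(P-Q)\le m-k-1$. Since $P\neq Q$ (they have disjoint roots), we get $m\ge k+1$. Hence $\langle x,x\rangle\ge\|x\|_1=2m\ge2(k+1)$, with equality only for $x=\one_S-\one_T$ where $|S|=|T|=k+1$, $S\cap T=\emptyset$, and $P_S-P_T$ is a nonzero constant. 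So once such a vector exists, $\min\mathcal{L}_q=2(k+1)$.

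\textbf{Counting minimal vectors.} Put $m=k+1$. Consider the $N=\binom qm$ subsets $S\subseteq\F_q$ of size $m$, and sort them into classes according to the coefficients of $X^{m-1},\dots,X^{1}$ in $P_S=\prod_{a\in S}(X-a)$; there are $q^{m-1}$ classes. By Cauchy--Schwarz, the number of ordered pairs $(S,T)$ in the same class is at least $N^2/q^{m-1}$. Discarding the $N$ diagonal pairs leaves at least $N^2/q^{m-1}-N$ pairs with $S\neq T$. For each such pair, $\one_S-\one_T\in\mathcal{L}_q$. If $S\cap T\neq\emptyset$, cancelling the common linear factors gives $S'=S\setminus T$, $T'=T\setminus S$ of size $m'<m$ with $\deg(P_{S'}-P_{T'})\le m'-k-1<0$, forcing $S'=T'$ and hence $S=T$, which is a contradiction. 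So all these pairs are disjoint, and they give distinct minimal vectors (both signs are already counted by ordering). Therefore
\[
  \tau(\mathcal{L}_q)\ \ge\ \binom qm^{2}q^{1-m}-\binom qm .
\]

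\textbf{Optimization.} This is the step I expect to need the most care, since the constant $\frac1{2\pi e^2}$ must come out exactly. Choose $m=\lfloor\sqrt q\rfloor$ or the nearest integer. Write $\binom qm=\frac{q^m}{m!}\prod_{i<m}(1-i/q)$, where the product is $e^{-m^2/(2q)+o(1)}=e^{-1/2+o(1)}$. Stirling gives $m!^2\sim 2\pi m\,(m/e)^{2m}$. Then
\[
  \binom qm^2 q^{1-m}\sim \frac{q^{m+1}e^{-1}}{2\pi m\,(m/e)^{2m}} ,
\]
and with $m=\sqrt q+\theta$, $|\theta|<1$, one gets $q^{m}/m^{2m}=e^{-2\theta\cdot O(1/\sqrt q)\cdots}$. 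I would expand $\log$ carefully to check that $(q/m^2)^m\to e^{-2\theta\sqrt q/\sqrt q\cdots}$ stays bounded and, combined with $e^{2m}=e^{2\sqrt q+2\theta}$, yields $\frac{1}{2\pi e^{2}}\sqrt q\,e^{2\sqrt q}(1+o(1))$. The subtracted term $N$ is negligible. If the rounding of $m$ costs a constant factor, I would instead pick $m$ to maximize the bound exactly and show that the $\theta$-dependence cancels to first order. Finally, $q\ge5$ ensures $1\le k<m\le q$ so that the construction is nondegenerate.
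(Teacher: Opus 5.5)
\textbf{The collision count is vacuous.} Your minimum argument is correct: it is Craig's bound, proved by the same degree argument as Lemma~\ref{lem:minimum} but with the congruence at infinity. The problem is the counting step. Your claim that the subtracted term $N$ is negligible is false. With $N=\binom{q}{m}$ sets sorted into $q^{m-1}$ classes, Cauchy--Schwarz gives $\sum_c n_c^2\ge N^2/q^{m-1}$, but
\[
  \frac{N^2}{q^{m-1}}=N\cdot\frac{\binom{q}{m}}{q^{m-1}}\le N\cdot\frac{q}{m!},
\]
and $q/m!\to0$ for $m\approx\sqrt q$. So $N^2/q^{m-1}-N<0$ for all large $q$. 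Indeed $\log N\sim\tfrac12\sqrt q\log q$, which swamps $\log(\sqrt q\,e^{2\sqrt q})\sim2\sqrt q$.

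There are far fewer $m$-sets than moment classes, so averaging over single sets cannot force even one collision. This is exactly why class-averaging bounds for Craig's lattices (Bacher's) break down when $a\sim\sqrt q$. Your lattice is a single fixed lattice, since the modulus at infinity is fixed, so nothing is left to pigeonhole over. Bounding its kissing number from below would need an equidistribution result for elementary symmetric functions that you do not have. Separately, your heuristic main term is miscomputed: $\binom{q}{m}^2$ carries the falling-factorial factor $e^{-1}$, which leads to $1/(2\pi e)$, not $1/(2\pi e^2)$. The $e^{-2}$ in the theorem comes from disjoint pairs, via $q(q-1)\cdots(q-2a+1)\approx q^{2a}e^{-2}$.

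\textbf{The missing idea is to pigeonhole \emph{pairs} over a \emph{family} of lattices indexed by finite moduli.}
\begin{itemize}
\item An ordered disjoint pair $(A,B)$ of $a$-sets with $\deg(P_A-P_B)=a-1$ determines the monic polynomial $f_{A,B}=(P_A-P_B)/\lc(P_A-P_B)$ of degree $a-1$. Varying the last element of $B$ shows that at most a $1/(q-2a+1)$ fraction of pairs fails the degree condition.
\item There are about $q^{2a}/(a!)^2$ such pairs but only $q^{a-1}$ possible $f$. Hence some $f$ receives at least $|\mathcal{G}|/q^{a-1}$ of them.
\item Replace your congruence at infinity by this finite modulus: on the labels $S_f=\{s:f(s)\ne0\}$, which contain the supports of all assigned pairs, set $K_f=\{z\in\Z^{S_f}:\sum_s z_s=0,\ \prod_s(X-s)^{z_s}=1\text{ in }(\F_q[X]/(f))^\times\}$.
\item Your degree argument carries over verbatim. $P_+-P_-$ is a nonzero multiple of $f$ whose leading terms cancel, so $m\ge a$. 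Its equality case shows that the minimal vectors of $K_f$ are exactly the pairs assigned to $f$.
\item Complete the rank with $M\Z\one_{S_f}$ and $M\Z$ on the omitted coordinates, where $M=2a+1$. This gives $\tau\ge\bigl(1-\frac{1}{q-2a+1}\bigr)\binom{q}{a}\binom{q-a}{a}/q^{a-1}$.
\item Stirling's formula at $a=\lfloor\sqrt q\rfloor$ then yields the constant $1/(2\pi e^2)$.
\end{itemize}
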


The same asymptotic lower bound holds on a set of integer dimensions of
natural density $1$. In every sufficiently large dimension $n$, we also
obtain the bound~\eqref{eq:all-dimensions}, with exponent
$2\sqrt{n}-\tfrac{1}{2}n^{1/40}$; see
Section~\ref{sec:other-dimensions}.

Bennett, Golovnev, and Stephens-Davidowitz~\cite{BGS} identify lattice
families with kissing numbers at least $2^{n^\varepsilon}$ as already of
interest for the complexity of lattice problems, for any fixed
$\varepsilon>0$. Our construction supplies such a family with
$\varepsilon=1/2$.


\section{The construction for prime power dimensions}
\label{sec:construction}

To obtain a large kissing number, we need to assemble many vectors of the
same length into a lattice in which they are all shortest. Correspondingly,
the real challenge is to ensure that integer combinations of the candidate
minimal vectors do not end up having shorter lengths.

Fix an integer $a\geq2$ with $2a<q$. The candidates for our construction are
\[
  z_{A,B}=\one_A-\one_B,
  \qquad
  A,B\subseteq\F_q,\quad |A|=|B|=a,\quad A\cap B=\varnothing,
\]
where $\one_A$ denotes the coordinate indicator vector of $A$. These
vectors lie in the hyperplane of coordinate sum zero, on the sphere of
radius $\sqrt{2a}$ centered at the origin. We seek a sublattice containing
many of these points, yet containing no nonzero point inside the sphere.
There are
\[
  \binom{q}{a}\binom{q-a}{a}
  =\binom{q}{2a}\binom{2a}{a}
\]
ordered pairs: after choosing the support, we choose which $a$ coordinates
are positive.

The sum-zero condition suggests a way to measure---and thus
control---shortness. For an integer vector $z$ in this hyperplane, the
positive coordinates and the absolute values of the negative coordinates
have the same sum, say $m$. Since $z_s^2\geq|z_s|$ for integer coordinates,
\[
  \|z\|^2\geq\sum_s|z_s|=2m.
\]
Equality holds precisely when every coordinate is $0$, $1$, or $-1$. Our
candidates attain equality with $m=a$. It therefore suffices to find a
lattice condition that retains many of these candidates and forces
$m\geq a$ for every nonzero vector.

We encode the positive and negative coordinates as root multiplicities in
two polynomials. Put
\[
  P_Z(X)=\prod_{s\in Z}(X-s).
\]
The two polynomials $P_A$ and $P_B$ are monic of degree $a$, so their
difference has degree at most $a-1$. We call a pair $A,B$ \emph{good} when
the degree of $P_A-P_B$ is exactly $a-1$, and let $\mathcal{G}$ be the set
of good pairs. Each good pair determines a monic polynomial by the
assignment
\begin{equation}
  \label{eq:assignment-map}
  (A,B)\longmapsto
  f_{A,B}=\frac{P_A-P_B}{\lc(P_A-P_B)},
\end{equation}
where $\lc$ denotes the leading coefficient. In particular,
$P_A\equiv P_B\pmod{f_{A,B}}$ by construction.

There are only $q^{a-1}$ monic polynomials of degree $a-1$. The
assignment~\eqref{eq:assignment-map} therefore partitions $\mathcal{G}$
into that many classes, some possibly empty. Write $R_f$ for the number
of good pairs assigned to $f$. By the pigeonhole principle, some class
has size at least $|\mathcal{G}|/q^{a-1}$. We select a modulus $f$
indexing such a class. In the proof below, we show that the degree
condition discards at most a $1/(q-2a+1)$ fraction of all ordered disjoint
pairs.

For the chosen modulus, the congruence has a lattice interpretation.
Retain the coordinates $s$ with $f(s)\ne0$, so that $X-s$ is invertible
modulo $f$. Let $K_f$ consist of the integer vectors on these coordinates
satisfying
\[
  \sum_s z_s=0,
  \qquad
  \prod_s(X-s)^{z_s}=1
  \quad\text{in }(\F_q[X]/(f))^\times.
\]
Every good pair assigned to $f$ gives a vector $z_{A,B}\in K_f$. Indeed,
none of its nonzero coordinates is removed, and its polynomial congruence
is precisely the multiplicative relation above.

The degree $a-1$ is the largest degree available for our candidate
differences. For any nonzero vector in $K_f$, the positive and negative
coordinates give distinct monic polynomials of degree $m$---and those
polynomials' difference is a nonzero multiple of $f$. The leading terms
cancel, so $a-1\leq m-1$, as required. Thus, we see that $K_f$ has no
nonzero points inside the sphere. The assigned candidates remain on its
boundary, and Lemma~\ref{lem:minimum} shows that they are exactly the
shortest vectors. We complete the rank by adjoining long orthogonal
directions, which leave these boundary points unchanged. Finally, we
choose $a$ near $\sqrt{q}$ to optimize their number.


\subsection*{Relation to earlier constructions}

As mentioned above, our approach builds on an earlier construction of
Bennett and Peikert~\cite{BennettPeikert}. The unrestricted multiplicative
kernel we use is a polynomial lattice of Li, Ling, Xing, and
Yeo~\cite[Section~3, Lemma~2]{LLXY}. For a modulus of degree $d$, Li, Ling,
Xing, and Yeo prove a squared-minimum bound of $d$, with equality only
when all nonzero coordinates have the same sign; their work concerns
closest-vector problems and cryptographic applications and does not give
a kissing-number count.
Our sum-zero restriction gives the stronger squared-minimum bound $2a$
when $d=a-1$. Beyond constructing the family, our argument counts the
signed vectors attaining this bound and selects a modulus by averaging.

The construction also connects to several classical examples. For prime
$q$, the analogous construction on all $q$ coordinate labels, with the
finite-modulus relation replaced by
\[
  \prod_{s\in\F_q}(1-sT)^{z_s}\equiv1\pmod{T^a},
\]
gives Craig's lattice $A_{q-1}^{(a)}$~\cite{Craig,ConwaySloane}.
We return to this congruence at infinity, and to kissing-number bounds for
Craig's lattices, in Section~\ref{sec:remarks}.
For irreducible $f$, the multiplicative relation defining $K_f$ uses the
linear factors underlying the Bose--Chowla $B_h$-sets~\cite{BoseChowla};
for split squarefree $f$, it has a discrete-logarithm interpretation as in
the work of Ducas and Pierrot~\cite{DucasPierrot}.

Selecting a large residue class of constant-weight words also appears in
the bounds of Graham and Sloane~\cite{GrahamSloane}.
Derksen~\cite[Theorem~5(a), Lemma~6]{Derksen} uses the same unit group and
linear factors to construct $B_h$-sequences for constant-weight codes, and
chooses the modulus to minimize the order of the unit group and thereby
improve the lower bounds on code sizes. Our modulus, by contrast, is
chosen to enable many minimal vectors.

For finite non-lattice packings, Alon~\cite{Alon} obtains more than
$2^{\sqrt{n}}$ contacts at every ball when $n$ is a proper power of $4$.
The existence of lattice families with exponentially large kissing
numbers remains open; the claimed constructions of
Vl\u{a}du\c{t}~\cite{Vladut} were found to be invalid~\cite{BGS}.


\section{Proof of the kissing bound}
\label{sec:kissing-bound}

We first make the construction of Section~\ref{sec:construction}
quantitative by proving an explicit lower bound on the kissing numbers
attained by the resulting lattices. For the bound, we allow the
coordinates to be indexed by any $n$-element subset of $\F_q$. We then
deduce Theorem~\ref{thm:main} by taking $n=q$, choosing
$a=\lfloor\sqrt{q}\rfloor$, and evaluating the bound asymptotically. The
flexibility to take $n<q$ will be useful when extending the construction
to general dimensions in Section~\ref{sec:other-dimensions}.

\begin{proposition}
\label{prop:finite}
Let $q$ be a prime power, and let $a,n$ be integers with $2\leq a$ and
$2a<n\leq q$. There is a full-rank lattice $\mathcal{L}\subseteq\Z^n$ such
that $\min \mathcal{L}=2a$ and
\begin{equation}
  \label{eq:finite}
  \tau(\mathcal{L})\geq
  \left(1-\frac{1}{n-2a+1}\right)
  \frac{\binom{n}{a}\binom{n-a}{a}}{q^{a-1}}.
\end{equation}
\end{proposition}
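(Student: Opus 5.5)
We follow the outline of Section~\ref{sec:construction}, with the coordinates indexed by a fixed $n$-element subset $S\subseteq\F_q$, so that $A,B\subseteq S$ throughout. The argument has four steps: count the good pairs, pick a modulus by pigeonhole, show the minimum of $K_f$ is $2a$, and complete $K_f$ to a full-rank lattice.

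\textbf{Step 1: counting good pairs.} Let $N=\binom{n}{a}\binom{n-a}{a}$ be the number of ordered disjoint pairs. We show that at most $N/(n-2a+1)$ of them are bad, i.e.\ satisfy $\deg(P_A-P_B)<a-1$. The coefficient of $X^{a-1}$ in $P_A-P_B$ is $\sum_{b\in B}b-\sum_{s\in A}s$, so the pair is bad exactly when $\sum A=\sum B$.

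Fix $A$ and $a-1$ elements of $B$. The last element $b$ ranges over the $n-2a+1$ remaining points of $S$, and exactly one value of $b$ makes the sums equal. Now double count triples $(A,B,\text{distinguished } b\in B)$. Each pair $(A,B)$ gives $a$ such triples. Among the triples coming from bad pairs, each choice of $A$ and $B\setminus\{b\}$ contributes at most one. Among all triples, each such choice contributes exactly $n-2a+1$. Hence the bad triples form at most a $1/(n-2a+1)$ fraction of all triples, and since every pair carries the same number $a$ of triples, bad pairs form at most the same fraction of all pairs. It follows that
\[
|\mathcal G|\ge\Bigl(1-\tfrac1{n-2a+1}\Bigr)N.
\]

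\textbf{Step 2: choosing the modulus.} There are $q^{a-1}$ monic polynomials of degree $a-1$. By pigeonhole, some $f$ satisfies $R_f\ge|\mathcal G|/q^{a-1}$. Let $T=\{s\in S:f(s)\neq0\}$ and define $K_f\subseteq\Z^T$ as in Section~\ref{sec:construction}. Each good pair assigned to $f$ is disjoint from the roots of $f$: if $f(s)=0$ and $s\in A$, then $f\mid P_A-P_B$ forces $P_B(s)=0$, so $s\in B$, contradicting disjointness. Hence each such $z_{A,B}$ lies in $K_f$, and distinct pairs give distinct vectors.

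\textbf{Step 3: the minimum of $K_f$.} Let $z\in K_f$ be nonzero, with positive part $u$ and negative part $v$, where $\sum u=\sum v=m$. The multiplicative relation says that $F=\prod(X-s)^{u_s}$ and $G=\prod(X-s)^{v_s}$ agree in $(\F_q[X]/(f))^\times$, that is, $f\mid F-G$. Both $F$ and $G$ are monic of degree $m$, and $F\neq G$ by unique factorization, because $u$ and $v$ have disjoint supports and $z\ne0$. So $F-G$ is nonzero of degree at most $m-1$, and divisibility by $f$ gives $a-1\le m-1$. Therefore $\|z\|^2\ge 2m\ge 2a$. Equality holds iff $m=a$ and all coordinates lie in $\{0,\pm1\}$, that is, iff $z=z_{A,B}$ with $f\mid P_A-P_B$.

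Every such vector is counted as a minimal vector. We do not need $\deg(P_A-P_B)=a-1$ for those we count, since we only need a lower bound and the good pairs assigned to $f$ are among the minimal vectors.

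\textbf{Step 4: completing to full rank.} $K_f$ is a finite-index sublattice of the sum-zero lattice of $\Z^T$, since it is the kernel of a map to a finite group, so it has rank $|T|-1$. Set
\[
\mathcal L=K_f\oplus M\,\Z\one_T\oplus M\,\Z^{S\setminus T},
\]
with $M$ a large integer, say $M=2a$. These summands are mutually orthogonal and the result lies in $\Z^n$ with full rank. If $x=k+w$ with $w\neq0$ in the added directions, then $\|x\|^2\ge\|w\|^2\ge M^2>2a$. So the minimum of $\mathcal L$ is $2a$ and its minimal vectors are exactly those of $K_f$, which gives \eqref{eq:finite}.

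\textbf{Main obstacle.} The delicate point is Step 1: obtaining the exact fraction $1/(n-2a+1)$ uniformly over $S$. In characteristic dividing $a$ one might worry about degeneracies, but the argument only uses that for each fixed partial configuration the sum condition pins down at most one value of the last element $b$. That holds in any characteristic.
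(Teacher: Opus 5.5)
Your proof is correct and follows essentially the same route as the paper. Your triple count over $(A,B,b)$ is a repackaging of the paper's count of ordered lists divided by $(a!)^2$, and the pigeonhole choice of $f$, the degree argument for $\min K_f\ge 2a$, and the orthogonal completion (with $M=2a$ instead of $2a+1$) all match. The only point left implicit is that the chosen $f$ has $R_f\ge1$, which you need so that $\min\mathcal{L}=2a$ is actually attained; it follows at once because $n-2a+1\ge2$ makes your lower bound on $|\mathcal{G}|$ positive.
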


To prove Proposition~\ref{prop:finite}, we first count pairs and select a
modulus. We then explicitly construct the lattice for the chosen modulus
via the recipe described in Section~\ref{sec:construction}. After that,
we prove the minimum bound, and complete the lattice to full rank.


\subsection*{Assigning pairs to moduli}

Fix $T\subseteq\F_q$ with $|T|=n$. For disjoint $a$-element subsets
$A,B\subseteq T$, put $z_{A,B}=\one_A-\one_B$. These are distinct integer
vectors of squared norm $2a$, one for each of the
$\binom{n}{a}\binom{n-a}{a}$ ordered pairs.

As in Section~\ref{sec:construction}, put
$P_Z(X)=\prod_{s\in Z}(X-s)$. The leading terms of $P_A$ and $P_B$ cancel,
so $\deg(P_A-P_B)\leq a-1$, and the coefficient of $X^{a-1}$ is
\begin{equation}
  \label{eq:leading-coefficient}
  \sum_{s\in B}s-\sum_{s\in A}s.
\end{equation}
Let $\mathcal{G}$ consist of the pairs for which the
coefficient~\eqref{eq:leading-coefficient} is nonzero; these are exactly
the \textit{good pairs} defined in Section~\ref{sec:construction}, with
$\deg(P_A-P_B)=a-1$.

For each monic polynomial $f$ of degree $a-1$, let $R_f$ be the number of
good pairs assigned to $f$ by the reduction~\eqref{eq:assignment-map}.
Equivalently, $R_f$ counts the ordered disjoint pairs satisfying
\begin{equation}
  \label{eq:assigned}
  P_A-P_B=cf
  \qquad\text{for some }c\in\F_q^\times.
\end{equation}
Each good pair is assigned to exactly one modulus. Reversing the pair
leaves its normalized difference unchanged and gives the opposite
integer vector. Thus the ordered-pair convention counts both signs, in
every characteristic.


\subsection*{Choosing a large class}

We bound the number of good pairs by varying one coordinate. Order the
elements of $A$ and $B$, and fix all but the last element of $B$. There
are $n-2a+1$ available values for the last element. Each gives a different
value of the coefficient~\eqref{eq:leading-coefficient}, so at most one
makes it zero. Every ordered pair of subsets is represented by exactly
$(a!)^2$ such ordered lists. Dividing the good and total list counts by
this common factor gives
\begin{equation}
  \label{eq:good-pairs}
  |\mathcal{G}|\geq
  \left(1-\frac{1}{n-2a+1}\right)\binom{n}{a}\binom{n-a}{a}.
\end{equation}

The fibers of~\eqref{eq:assignment-map} partition $\mathcal{G}$, so
\[
  \sum_{\substack{f\text{ monic}\\ \deg f=a-1}}R_f=|\mathcal{G}|;
\]
there are $q^{a-1}$ terms in this sum, including any zero terms. We choose
a modulus $f$ for which
\begin{equation}
  \label{eq:chosen-modulus}
  R_f\geq\frac{|\mathcal{G}|}{q^{a-1}}
  \geq\left(1-\frac{1}{n-2a+1}\right)
  \frac{\binom{n}{a}\binom{n-a}{a}}{q^{a-1}}>0.
\end{equation}
It remains to realize these $R_f$ pairs as the minimal vectors of a
full-rank lattice. The construction and minimum estimate below apply to
every monic degree-$(a-1)$ modulus; we use the one just selected.


\subsection*{The lattice for the chosen modulus}

Negative exponents in a multiplicative relation require units. We
therefore retain only the labels
\[
  S_f=\{s\in T:f(s)\ne0\}.
\]
This removes no coordinate of any pair counted by $R_f$. Indeed, if
$s\in A$, then disjointness gives
\[
  cf(s)=(P_A-P_B)(s)=-P_B(s)\ne0,
\]
and the same argument applies to $s\in B$.

For $s\in S_f$, the residue class of $X-s$ is a unit in $\F_q[X]/(f)$.
Write $U_f=(\F_q[X]/(f))^\times$ and define
\begin{equation}
  \label{eq:kernel}
  K_f=\left\{
    z\in\Z^{S_f}:\sum_{s\in S_f}z_s=0,
    \ \prod_{s\in S_f}(X-s)^{z_s}=1\text{ in }U_f
  \right\}.
\end{equation}
The first condition in~\eqref{eq:kernel} balances the positive and
negative coordinates. The second admits every assigned vector: for a
pair counted by $R_f$,
\[
  \prod_{s\in S_f}(X-s)^{(z_{A,B})_s}
  =P_A P_B^{-1}=1
  \quad\text{in }U_f.
\]
Both conditions are preserved under addition and negation, so they
define an integer lattice. Moreover, $U_f$ is finite. Hence $K_f$ has
finite index in the integer sum-zero lattice on $S_f$, and rank
$|S_f|-1$.


\subsection*{Excluding shorter vectors}

We now show that every nonzero vector of $K_f$ has squared norm at least
$2a$. For this, we adapt the degree argument of~\cite[Section~3,
Lemma~2]{LLXY} to our sum-zero setting. The lower bound also follows from
Derksen's $B_{a-1}$-sequence construction~\cite[Theorem~5(a)]{Derksen}.
The argument below additionally identifies all equality vectors.

\begin{lemma}
\label{lem:minimum}
Every nonzero vector of $K_f$ has squared norm at least $2a$. Its vectors
of squared norm $2a$ are exactly the vectors $\one_A-\one_B$ arising from
the $R_f$ pairs in~\eqref{eq:assigned}.
\end{lemma}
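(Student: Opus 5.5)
Let $z\in K_f$ be nonzero. We split it into its positive and negative parts. Write $z=z^+-z^-$ with $z^\pm\in\Z_{\geq0}^{S_f}$ having disjoint supports. The sum-zero condition gives $\sum_s z^+_s=\sum_s z^-_s=m$, and $m\geq1$ since $z\ne0$. Next we form the monic polynomials
\[
  F=\prod_{s\in S_f}(X-s)^{z^+_s},\qquad G=\prod_{s\in S_f}(X-s)^{z^-_s},
\]
each of degree $m$. The multiplicative condition in~\eqref{eq:kernel} says $FG^{-1}=1$ in $U_f$, hence $F\equiv G\pmod f$, so $f\mid F-G$.

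\textbf{Step 1: $F\ne G$.} The supports of $z^+$ and $z^-$ are disjoint, so $F$ and $G$ have no common root. Both have degree $m\geq1$, so by unique factorization in $\F_q[X]$ they are distinct. The leading terms cancel, so $F-G$ is a nonzero polynomial of degree at most $m-1$. Since $f$ divides it and $\deg f=a-1$, we get $a-1\leq m-1$, that is, $m\geq a$. The inequality $z_s^2\geq|z_s|$ from Section~\ref{sec:construction} then gives
\[
  \|z\|^2\geq\sum_s|z_s|=2m\geq 2a.
\]
This proves the first claim.

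\textbf{Step 2: equality case.} Suppose $\|z\|^2=2a$. Then both inequalities above are equalities. So $m=a$, and every coordinate lies in $\{0,\pm1\}$. Hence $z=\one_A-\one_B$ with $A,B\subseteq S_f\subseteq T$ disjoint and $|A|=|B|=a$, and we have $F=P_A$, $G=P_B$. Now $P_A-P_B$ is a nonzero multiple of $f$ of degree at most $a-1=\deg f$. Therefore it equals $cf$ for some $c\in\F_q^\times$, which is exactly~\eqref{eq:assigned}. In particular its degree is exactly $a-1$, so the pair is good and is one of the $R_f$ pairs.

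Conversely, each pair counted by $R_f$ has all its coordinates in $S_f$, as shown before the lemma, and lies in $K_f$ by the computation following~\eqref{eq:kernel}. Its squared norm is $2a$.

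\textbf{Main obstacle.} The step needing most care is the nonvanishing of $F-G$ in Step 1. Coprimality of $F$ and $G$ comes from the disjoint supports together with $m\geq1$. Without it, the divisibility $f\mid F-G$ would give no degree bound. Everything else is bookkeeping.
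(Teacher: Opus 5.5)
Your proof is correct and follows the paper's argument essentially step for step. You use the same positive and negative polynomials $F,G$ (the paper's $P_\pm$), the degree bound $a-1\le m-1$ from $f\mid F-G$ with cancelling leading terms, the inequality $z_s^2\ge|z_s|$, and the equality analysis forcing $P_A-P_B=cf$; your explicit note that $m\ge1$ is needed for $F\ne G$ only spells out what the paper leaves implicit.
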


\begin{proof}
Let $0\ne z\in K_f$. The sum of the positive coordinates equals the sum
of the absolute values of the negative coordinates; we write their common
value as $m$. Form the monic degree-$m$ polynomials
\[
  P_+(X)=\prod_{z_s>0}(X-s)^{z_s},
  \qquad
  P_-(X)=\prod_{z_s<0}(X-s)^{-z_s};
\]
their root supports are disjoint, so unique factorization gives
$P_+\ne P_-$. The multiplicative relation defining $K_f$ gives
$P_+\equiv P_-\pmod{f}$. Thus $P_+-P_-$ is a nonzero multiple of $f$.
Since its leading terms cancel,
\begin{equation}
  \label{eq:degree}
  a-1=\deg f\leq\deg(P_+-P_-)\leq m-1.
\end{equation}
It follows that $m\geq a$. Each coordinate is an integer, so
$z_s^2\geq|z_s|$. Therefore
\begin{equation}
  \label{eq:mass}
  \langle z,z\rangle
  =\sum_s z_s^2\geq\sum_s|z_s|=2m\geq2a.
\end{equation}

Equality in~\eqref{eq:mass} requires both $m=a$ and that every coordinate
lies in $\{0,1,-1\}$. Let $A$ and $B$ be the positive and negative
supports. They each have size $a$, and~\eqref{eq:degree} forces
$P_A-P_B=cf$ for some $c\ne0$. Thus $(A,B)$ is counted by $R_f$.
Conversely, every pair in~\eqref{eq:assigned} gives a vector of $K_f$
with squared norm $2a$.
\end{proof}

Note that no irreducibility or squarefreeness assumption on $f$ was used
in the proof of Lemma~\ref{lem:minimum}, and moreover the argument works
in every characteristic. For the selected modulus, $R_f>0$. Hence the
minimum is attained, and $K_f$ has squared minimum $2a$ and exactly $R_f$
minimal vectors.


\subsection*{Completing the rank}

The argument thus far shows that our constructed lattice $K_f$ has the
target number of minimal vectors. However, $K_f$ is not full-rank. Thus,
to complete the proof of Proposition~\ref{prop:finite}, we must show that
it is possible to extend $K_f$ to a full-rank lattice without introducing
any additional vectors of squared norm at most $2a$.

Put $N=|S_f|$ and $M=2a+1$. The degree bound on $f$ gives
$N\geq n-a+1>0$. We fill the missing directions with long orthogonal
vectors by setting
\begin{equation}
  \label{eq:completion}
  \mathcal{L}_f
  =\bigl(K_f+M\Z\one_{S_f}\bigr)\oplus M\Z^{n-N}
  \subseteq\Z^n.
\end{equation}
The all-ones direction completes the sum-zero subspace on $S_f$; the
other $n-N$ axes fill the omitted coordinates. Thus $\mathcal{L}_f$ has
rank $n$.

Every vector of $\mathcal{L}_f$ has a unique expression
$z+kM\one_{S_f}+Mw$, with $z\in K_f$, $k\in\Z$, and
$w\in\Z^{T\setminus S_f}$. Orthogonality gives
\begin{equation}
  \label{eq:orthogonality}
  \|z+kM\one_{S_f}+Mw\|^2
  =\|z\|^2+k^2M^2N+M^2\|w\|^2.
\end{equation}
If either $k$ or $w$ is nonzero, then~\eqref{eq:orthogonality} exceeds
$2a$. The added directions therefore leave the minimal vectors unchanged.
Consequently, we have
\begin{equation}
  \label{eq:exact-kissing}
  \rank \mathcal{L}_f=n,
  \qquad
  \min \mathcal{L}_f=2a,
  \qquad
  \tau(\mathcal{L}_f)=R_f;
\end{equation}
combining this with~\eqref{eq:chosen-modulus} proves~\eqref{eq:finite},
establishing Proposition~\ref{prop:finite}.


\subsection*{Optimizing the count}

We now take $n=q$. The scale of the optimal $a$ can be seen before the
asymptotic calculation. Replacing the falling factorial in
$\binom{q}{a}\binom{q-a}{a}$ by $q^{2a}$ gives the rough count
\[
  \frac{q^{2a}}{(a!)^2}\,\frac{1}{q^{a-1}}
  =\frac{q^{a+1}}{(a!)^2};
\]
the ratio of successive values of this expression is $q/(a+1)^2$. The
rough count therefore increases until $a$ is near $\sqrt{q}$ and
decreases afterwards. This identifies the scale; the exact calculation
retains the falling-factorial correction, which supplies the factor
$e^{-2}$.

\begin{proof}[Proof of Theorem~\ref{thm:main}]
Take $n=q$ and $a=\lfloor\sqrt{q}\rfloor$ in
Proposition~\ref{prop:finite}; these parameters are admissible for every
prime power $q\geq5$. Write
\[
  H(q,a)=\frac{\binom{q}{a}\binom{q-a}{a}}{q^{a-1}}
  =\frac{q!}{(a!)^2(q-2a)!q^{a-1}}.
\]
For $a=\sqrt{q}+O(1)$, Stirling's formula and the expansion of the falling
factorial give
\begin{align*}
  \log H(q,a)
  &=\log\frac{q}{2\pi a}
    +2a\left(1+\log\frac{\sqrt{q}}{a}\right)
    -\frac{a(2a-1)}{q}+O(q^{-1/2})\\
  &=2\sqrt{q}+\tfrac{1}{2}\log q-\log(2\pi)-2+o(1).
\end{align*}
Exponentiating the last display and using $e^{o(1)}=1+o(1)$ gives
\[
  H(q,a)=\left(\frac{1}{2\pi e^2}+o(1)\right)
  \sqrt{q}\,e^{2\sqrt{q}}.
\]
Proposition~\ref{prop:finite} gives
\[
  \tau(\mathcal{L}_q)\geq
  \left(1-\frac{1}{q-2a+1}\right)H(q,a).
\]
Since $a=\lfloor\sqrt{q}\rfloor$, the denominator $q-2a+1$ tends to
infinity, so the factor multiplying $H(q,a)$ is $1+o(1)$. Multiplication
by this factor does not change the leading constant in the asymptotic
expression for $H(q,a)$, proving~\eqref{eq:main}.
\end{proof}


\section{Extension to general dimensions}
\label{sec:other-dimensions}

We now extend the construction to general dimensions by way of nearby
prime powers. For a given dimension $n$, we choose a nearby prime power
$q$. If $q\leq n$, we adjoin long orthogonal coordinate directions; if
$q>n$, we apply Proposition~\ref{prop:finite} to $n$ labels in $\F_q$.
The loss in the kissing-number bound is controlled by $|q-n|$. Put
\[
  \Delta(n)=\min\{|n-q|:\text{$q\geq5$ is a prime power}\};
\]
any bound on the distance to a prime also bounds $\Delta(n)$.

\begin{proposition}
\label{prop:dimension-transfer}
Let $\mathcal{N}$ be an unbounded set of positive integers, and suppose
that $\Delta(n)\leq D(n)<n$ for $n\in\mathcal{N}$, with
$n-D(n)\to\infty$.

There are full-rank lattices $\mathcal{L}_n\subseteq\Z^n$ such that, as
$n\to\infty$ through $\mathcal{N}$,
\begin{equation}
  \label{eq:dimension-transfer}
  \tau(\mathcal{L}_n)\geq
  \left(\frac{1}{2\pi e^2}+o(1)\right)
  \sqrt{n-D(n)}\,\exp\bigl(2\sqrt{n-D(n)}\bigr).
\end{equation}
In particular, if $D(n)=o(n^{3/4})$, then
\begin{equation}
  \label{eq:distance-loss}
  \tau(\mathcal{L}_n)\geq
  \left(\frac{1}{2\pi e^2}+o(1)\right)\sqrt{n}\,
  \exp\left(2\sqrt{n}-\frac{D(n)}{\sqrt{n}}\right).
\end{equation}
\end{proposition}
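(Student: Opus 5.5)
For each $n\in\mathcal{N}$, we pick a prime power $q\geq5$ with $|n-q|=\Delta(n)\leq D(n)$ and split into two cases according to the sign of $q-n$. In both cases we apply Proposition~\ref{prop:finite} with parameter $a=\lfloor\sqrt{m}\rfloor$, where $m$ is the number of labels used, and then compare the resulting count with $H(m,a)$ from the proof of Theorem~\ref{thm:main}.

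\emph{Case $q\leq n$.} We take the lattice $\mathcal{L}_q\subseteq\Z^q$ from Proposition~\ref{prop:finite} with $n=q$ and $a=\lfloor\sqrt q\rfloor$. Then we set
\[
  \mathcal{L}_n=\mathcal{L}_q\oplus M\Z^{n-q},\qquad M=2a+1.
\]
As in~\eqref{eq:orthogonality}, any nonzero vector with a nonzero component in the new coordinates has squared norm at least $M^2>2a$. Hence $\min\mathcal{L}_n=2a$ and $\tau(\mathcal{L}_n)=\tau(\mathcal{L}_q)$. By Theorem~\ref{thm:main}, this is $\bigl(\tfrac{1}{2\pi e^2}+o(1)\bigr)\sqrt q\,e^{2\sqrt q}$. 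Since $q\geq n-D(n)\to\infty$ and $x\mapsto\sqrt x\,e^{2\sqrt x}$ is increasing, we get~\eqref{eq:dimension-transfer}.

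\emph{Case $q>n$.} We apply Proposition~\ref{prop:finite} directly with $n$ labels in $\F_q$ and $a=\lfloor\sqrt n\rfloor$; admissibility holds for large $n$. The bound~\eqref{eq:finite} equals $\bigl(1-\tfrac{1}{n-2a+1}\bigr)H(n,a)\,(n/q)^{a-1}$. The factor $H(n,a)$ is $\bigl(\tfrac{1}{2\pi e^2}+o(1)\bigr)\sqrt n\,e^{2\sqrt n}$ by the proof of Theorem~\ref{thm:main}; that Stirling computation uses only that $a=\sqrt n+O(1)$, not that $n$ is a prime power. The new loss factor is
\[
  (n/q)^{a-1}\geq\Bigl(1+\frac{D}{n}\Bigr)^{-(a-1)}\geq\exp\bigl(-(a-1)D/n\bigr)\geq\exp(-D/\sqrt n).
\]
We must show that $\sqrt n\,e^{2\sqrt n-D/\sqrt n}\geq(1+o(1))\sqrt{n-D}\,e^{2\sqrt{n-D}}$. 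This follows from $\sqrt n\geq\sqrt{n-D}$ together with
\[
  2\sqrt n-2\sqrt{n-D}=\frac{2D}{\sqrt n+\sqrt{n-D}}\geq\frac{D}{\sqrt n}.
\]
The main care point is this comparison: we need the inequality to hold exactly rather than only up to $o(1)$ in the exponent. It does hold exactly, because $\sqrt n+\sqrt{n-D}\leq2\sqrt n$.

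\emph{The ``in particular'' bound~\eqref{eq:distance-loss}.} In the second case we already obtained~\eqref{eq:distance-loss} directly. In the first case we expand
\[
  2\sqrt{n-D}=2\sqrt n-\frac{D}{\sqrt n}-\frac{D^2}{4n^{3/2}}-\cdots=2\sqrt n-\frac{D}{\sqrt n}+O\bigl(D^2/n^{3/2}\bigr).
\]
The error term is $o(1)$ exactly when $D=o(n^{3/4})$, which is where that hypothesis is needed. We also have $\sqrt{n-D}=(1+o(1))\sqrt n$. Absorbing the $e^{o(1)}$ factor into the leading constant gives~\eqref{eq:distance-loss}.
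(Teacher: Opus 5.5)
Your proof is correct and follows the paper's argument essentially step for step. You use the same split into $q\le n$ (pad $\mathcal{L}_q$ with long orthogonal axes and use monotonicity of $\sqrt{x}\,e^{2\sqrt{x}}$) and $q>n$ (apply Proposition~\ref{prop:finite} to $n$ labels with $a=\lfloor\sqrt n\rfloor$, bound $(n/q)^{a-1}\geq e^{-D/\sqrt n}$, and use the exact inequality $2\sqrt n-D/\sqrt n\geq2\sqrt{n-D}$), then the same second-order expansion for the $D=o(n^{3/4})$ refinement. The differences are cosmetic: you observe that the second case gives \eqref{eq:distance-loss} directly rather than routing it through \eqref{eq:dimension-transfer}, and where you say $\tau(\mathcal{L}_q)$ ``is'' the quantity from Theorem~\ref{thm:main} you should say ``is at least.''
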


\begin{proof}
Choose a prime power $q\geq5$ with $|q-n|\leq D(n)$, and put $r=n-D(n)$.
If $q\leq n$, then adjoin sufficiently long integer coordinate axes to
the lattice of Theorem~\ref{thm:main}---this preserves its kissing number.
Since $q\geq r\to\infty$ and
$\sqrt{q}\,e^{2\sqrt{q}}\geq\sqrt{r}\,e^{2\sqrt{r}}$, we obtain
\eqref{eq:dimension-transfer}.

If $q>n$, then we apply Proposition~\ref{prop:finite} to $n$ labels in
$\F_q$ and $a=\lfloor\sqrt{n}\rfloor$. The Stirling calculation in the
proof of Theorem~\ref{thm:main} applies to every integer $n$ and gives
\[
  \tau(\mathcal{L}_n)\geq
  \left(\frac{1}{2\pi e^2}+o(1)\right)
  \sqrt{n}\,e^{2\sqrt{n}}\left(\frac{n}{q}\right)^{a-1}.
\]
Because $a-1\leq\sqrt{n}$ and $\log(1+t)\leq t$ for $t\geq0$, we have
\[
  \left(\frac{n}{q}\right)^{a-1}
  \geq\exp\left(-\frac{q-n}{\sqrt{n}}\right)
  \geq\exp\left(-\frac{D(n)}{\sqrt{n}}\right);
\]
together with $\sqrt{n}\geq\sqrt{r}$ and
$2\sqrt{n}-D(n)/\sqrt{n}\geq2\sqrt{r}$, this proves
\eqref{eq:dimension-transfer} in the second case.

Finally, if $D(n)=o(n^{3/4})$, then $\sqrt{n-D(n)}\sim\sqrt{n}$ and
\[
  2\sqrt{n-D(n)}
  =2\sqrt{n}-\frac{D(n)}{\sqrt{n}}
  +O\left(\frac{D(n)^2}{n^{3/2}}\right).
\]
The assumption $D(n)=o(n^{3/4})$ is used only to make the error $o(1)$,
which gives~\eqref{eq:distance-loss}.
\end{proof}

The bounds in Proposition~\ref{prop:dimension-transfer} distinguish three
scales. A distance $D(n)=o(n)$ preserves the exponential coefficient $2$,
giving $\tau(\mathcal{L}_n)\geq\exp((2-o(1))\sqrt{n})$.
A distance $D(n)=A\sqrt{n}+o(\sqrt{n})$, for fixed $A\geq0$, gives the
leading constant $e^{-A}/(2\pi e^2)$ in~\eqref{eq:main};
$D(n)=o(\sqrt{n})$ preserves the full asymptotic bound.

As the construction accepts a prime power on either side of $n$, we can
center a one-sided prime interval at $n$ and halve the distance bound, to
obtain the following direct substitution rule.

\begin{corollary}
\label{cor:short-intervals}
Suppose that, for fixed $C>0$ and $0<\theta<3/4$, every sufficiently large
interval $[x-Cx^\theta,x]$ contains a prime power. Then there are full-rank
lattices $\mathcal{L}_n\subseteq\Z^n$ with
\begin{equation}
  \label{eq:interval-transfer}
  \tau(\mathcal{L}_n)\geq
  \left(\frac{1}{2\pi e^2}+o(1)\right)\sqrt{n}\,
  \exp\left(2\sqrt{n}-\frac{C}{2}n^{\theta-1/2}\right)
  \qquad(n\to\infty).
\end{equation}
\end{corollary}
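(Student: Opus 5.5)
We apply Proposition~\ref{prop:dimension-transfer} with $\mathcal{N}$ the set of all sufficiently large integers and $D(n)=\lceil\tfrac{C}{2}n^\theta\rceil$, or more precisely $D(n)=\tfrac{C}{2}n^\theta(1+o(1))$. The point is that the proposition accepts a prime power on either side of $n$. So rather than using the hypothesis at $x=n$, which only gives $\Delta(n)\le Cn^\theta$, we apply it at a shifted point $x=n+\tfrac{C}{2}x^\theta$ so that the interval $[x-Cx^\theta,x]$ is (essentially) centered at $n$.

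Concretely, define $x=x(n)$ as the solution of $x-\tfrac{C}{2}x^\theta=n$. The map $x\mapsto x-\tfrac{C}{2}x^\theta$ is continuous and eventually increasing to infinity since $\theta<1$, so such an $x$ exists and satisfies $x=n+\tfrac{C}{2}n^\theta(1+o(1))$. The hypothesis then gives a prime power $q\in[x-Cx^\theta,x]$, and every point of this interval lies within $\tfrac{C}{2}x^\theta$ of its midpoint $n$. Hence
\[
  \Delta(n)\le \tfrac{C}{2}x^\theta=\tfrac{C}{2}n^\theta\bigl(1+O(n^{\theta-1})\bigr)=:D(n).
\]
For large $n$ we have $q\geq5$, so $\Delta(n)$ is bounded by $D(n)$.

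Next we check the hypotheses of Proposition~\ref{prop:dimension-transfer}. We have $D(n)<n$, and $n-D(n)\to\infty$ because $\theta<1$. Also $D(n)=o(n^{3/4})$ because $\theta<3/4$. Then \eqref{eq:distance-loss} gives
\[
  \exp\Bigl(2\sqrt n-\tfrac{D(n)}{\sqrt n}\Bigr),
  \qquad
  \frac{D(n)}{\sqrt n}=\tfrac{C}{2}n^{\theta-1/2}+O\bigl(n^{2\theta-3/2}\bigr).
\]
The error term is $o(1)$ precisely because $\theta<3/4$, so it can be absorbed into the $o(1)$ of the leading constant. This yields \eqref{eq:interval-transfer}.

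The only delicate step is the bookkeeping in the centering: we must make sure the correction from replacing $x^\theta$ by $n^\theta$ is $o(1)$ after dividing by $\sqrt n$. It is $O(n^{2\theta-3/2})$, which suffices. If one prefers to avoid solving for $x$, one can take $x=n+\lceil\tfrac{C}{2}(2n)^\theta\rceil$ and bound crudely, but that loses the constant. So the implicit choice of $x$ is the one to use.
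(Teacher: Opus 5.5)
Your proof is correct and is essentially the paper's argument: shift $x$ so that the interval $[x-Cx^\theta,x]$ is centered at $n$, obtain $\Delta(n)\le\frac{C}{2}n^\theta+O(n^{2\theta-1})$, and apply \eqref{eq:distance-loss}, where the error $O(n^{2\theta-3/2})=o(1)$ is absorbed into the leading constant. The only difference is that you solve $x-\frac{C}{2}x^\theta=n$ exactly, whereas the paper takes the explicit choice $x=n+\frac{C}{2}n^\theta$. That choice gives the same $O(n^{2\theta-1})$ discrepancy, so contrary to your closing remark, an explicit choice of $x$ loses nothing.
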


\begin{proof}
Take $x=n+\tfrac{C}{2}n^\theta$. Since
$Cx^\theta=Cn^\theta+O(n^{2\theta-1})$, the interval hypothesis gives
\[
  \Delta(n)\leq\frac{C}{2}n^\theta+O(n^{2\theta-1}).
\]
Then we apply~\eqref{eq:distance-loss}; the error divided by $\sqrt{n}$ is
$O(n^{2\theta-3/2})=o(1)$.
\end{proof}

Thus any improvement in $\theta$ improves the loss exponent $\theta-1/2$
in~\eqref{eq:interval-transfer}, and improving $C$ improves the
coefficient. An interval exponent $\theta<1/2$ would recover the full
bound~\eqref{eq:main} in every sufficiently large dimension.


\subsection*{Bounds for every sufficiently large dimension}

Baker, Harman, and Pintz~\cite{BHP} prove the interval hypothesis with
$C=1$ and $\theta=21/40$. Corollary~\ref{cor:short-intervals} thus gives
\begin{equation}
  \label{eq:all-dimensions}
  \tau(\mathcal{L}_n)\geq
  \left(\frac{1}{2\pi e^2}+o(1)\right)
  \sqrt{n}\,\exp\bigl(2\sqrt{n}-\tfrac{1}{2}n^{1/40}\bigr).
\end{equation}


\subsection*{A set of dimensions of natural density $1$}

Suppose that, for some fixed $0<\eta<1/2$, all but $o(X)$ integers
$n\in[X,2X]$ have a prime within distance $O(n^\eta)$.
Dyadic summation then gives $\Delta(n)=o(\sqrt{n})$ on a set
$\mathcal{N}$ of natural density $1$, so
Proposition~\ref{prop:dimension-transfer} yields
\begin{equation}
  \label{eq:density-one}
  \tau(\mathcal{L}_n)\geq
  \left(\frac{1}{2\pi e^2}+o(1)\right)\sqrt{n}\,e^{2\sqrt{n}}
  \qquad(n\to\infty,\ n\in\mathcal{N}).
\end{equation}

Available interval exponents include $\eta=2/15+\varepsilon$ from Guth
and Maynard~\cite[Corollary~1.4]{GuthMaynard2026} and
$\eta=1/20+\varepsilon$ from Jia~\cite{JiaShortIntervals}. Both give the
same leading asymptotic~\eqref{eq:density-one}, since
$D(n)/\sqrt{n}\to0$.


\section{Remarks}
\label{sec:remarks}


\subsection*{Relation to Craig's lattices}

For a prime $p$ and $2\leq a<p/2$, Craig's lattice has the moment
description
\begin{equation}
  \label{eq:craig}
  A_{p-1}^{(a)}
  =\left\{
    x\in\Z^p:
    \sum_{s\in\F_p}x_s=0,\quad
    \sum_{s\in\F_p}s^j x_s\equiv0\!\!\!\!\pmod{p}
    \ \text{for $1\leq j\leq a-1$}
  \right\};
\end{equation}
see~\cite{Craig,ConwaySloane} and~\cite[Section~8.1]{Bacher}.
The moment conditions are equivalent to
\[
  \prod_{s\in\F_p}(1-sT)^{x_s}\equiv1\!\!\!\!\pmod{T^a}.
\]
Thus Craig's bound $\min A_{p-1}^{(a)}\geq2a$ follows from the same
degree argument as Lemma~\ref{lem:minimum}, with a congruence at infinity
replacing the finite polynomial modulus.
Bennett and Peikert~\cite{BennettPeikert} also note the close relationship
between their lattices and Craig's.

The vectors of squared norm $2a$ in Craig's lattice are exactly
$\one_A-\one_B$ for disjoint $a$-element sets $A,B\subseteq\F_p$
satisfying $P_A-P_B\in\F_p^\times$.
These constant-difference pairs form a subset of the pairs excluded by
our condition $\deg(P_A-P_B)=a-1$.
The key distinction, however, is in the counting argument.
Averaging $a$-element sets over the $p^{a-1}$ moment
classes~\cite[Corollary~8.3]{Bacher} does not force even two sets into one
class when $a\sim\sqrt{p}$, since the average class size satisfies
\[
  \frac{\binom{p}{a}}{p^{a-1}}\leq\frac{p}{a!}<1
\]
for sufficiently large $p$.

Although Bacher's bound yields superpolynomial kissing numbers when $a$
grows sufficiently slowly with $p$, it becomes ineffective in the regime
$a\sim\sqrt{p}$ relevant here.
Our argument instead assigns disjoint pairs $(A,B)$ to moduli $f_{A,B}$,
obtaining an unconditional lower bound for the number of minimal vectors
in a suitably chosen $K_f$.

An earlier version of this paper~\cite{Laarhoven}, following Graham and
Sloane~\cite{GrahamSloane} and Bennett and
Peikert~\cite[Section~3.2]{BennettPeikert}, selected a large moment class
of binary vectors of weight $2a$ and adjoined its coset to Craig's
lattice.
The class average was $\binom{p}{2a}/p^{a-1}$; optimizing gave
$2a\sim\sqrt{p}$ and a kissing-number lower bound
$\exp((1+o(1))\sqrt{p})$.
Here each support of size $2a$ instead supplies $\binom{2a}{a}$ signed
candidates in the sum-zero hyperplane.
For $q=p$, our lower bound is the binary-class average multiplied by
\[
  \left(1-\frac{1}{p-2a+1}\right)\binom{2a}{a},
\]
where the first factor accounts for the degree condition.
Optimizing now gives $a\sim\sqrt{p}$ and increases the square-root
exponential coefficient from $1$ to $2$.


\subsection*{Computing a modulus}

The modulus in~\eqref{eq:chosen-modulus} can be found by enumerating the
good pairs and counting the fibers of~\eqref{eq:assignment-map}. This
gives a finite search, but we do not obtain a polynomial-time method for
selecting and certifying a modulus attaining~\eqref{eq:finite}.
Computing a basis is a separate task.


\subsection*{Exceptional moduli}

For fixed $c>0$ and $a=c\sqrt{q}+O(1)$, the count in the proof satisfies
\[
  \log H(q,a)=2c(1-\log c)\sqrt{q}+O(\log q).
\]
The coefficient is maximized at $c=1$, with value $2$. More generally,
$a!\geq(a/e)^a$ gives
$H(q,a)\leq q(e\sqrt{q}/a)^{2a}\leq qe^{2\sqrt{q}}$ for every admissible
$a$, so optimizing $a$ cannot increase the coefficient supplied by this
averaging bound.

The individual counts $R_f$ may nevertheless exceed their average. If one
could identify a family of moduli with at least $\exp(\delta\sqrt{q})$
times the average pair count, for fixed $\delta>0$ and
$a=\sqrt{q}+O(1)$, that would increase the square-root exponential
coefficient from $2$ to $2+\delta$.


\providecommand{\bysame}{\leavevmode\hbox to3em{\hrulefill}\thinspace}
\providecommand{\MR}{\relax\ifhmode\unskip\space\fi MR }
\providecommand{\MRhref}[2]{%
  \href{http://www.ams.org/mathscinet-getitem?mr=#1}{#2}
}
\providecommand{\href}[2]{#2}

\end{document}